\theoremstyle{plain}
\newtheorem{thm}{Theorem}[section]
\newtheorem{lemma}[thm]{Lemma}
\newtheorem{proposition}[thm]{Proposition}
\theoremstyle{definition}
\newtheorem{remark}[thm]{Remark}
\numberwithin{equation}{section}
\newcommand{\sA}{{\mathcal A}}
\newcommand{\sI}{{\mathcal I}}
\newcommand{\sL}{{\mathcal L}}
\newcommand{\sO}{{\mathcal O}}
\title[Projective normality of abelian varieties]{Buser-Sarnak invariant and projective normality of abelian varieties }
\author[Jun-Muk Hwang and Wing-Keung To]{Jun-Muk Hwang${}^1$ and Wing-Keung To${}^2$}
\address{Jun-Muk Hwang, Korea Institute for Advanced Study, Hoegiro 87, Seoul, 130-722, Korea} \email{jmhwang@kias.re.kr}
\address{Wing-Keung To,
Department of Mathematics, National University of Singapore, 2
Science Drive 2, Singapore 117543} \email{mattowk@nus.edu.sg}
\thanks{${}^1$ supported by Basic Science Research Program (ASARC) through NRF funded
by MEST (2009-0063180)}
\thanks{${}^2$ supported partially by the research grant R-146-000-106-112 from the National University of Singapore and the Ministry of Education}
\begin{document}

\maketitle \numberwithin{equation}{section}
\bigskip
\noindent {\bf Abstract.} {\it We show that a general $n$-dimensional polarized abelian variety $(A,L)$ of a given polarization
type and satisfying $\displaystyle h^0(A, L) \geq \dfrac{8^n}{2}
\cdot \dfrac{n^n}{ n ! }$ is projectively normal.  In the process, we also obtain a sharp lower bound for the volume of a purely one-dimensional complex analytic subvariety in a geodesic tubular neighborhood of a subtorus of a compact complex torus.}

\bigskip\noindent {\it Keywords}: abelian varieties, projective
normality, Buser-Sarnak invariant, Seshadri number

\bigskip\noindent {\it Mathematics Subject Classification (2000)}: 14K99,
32J25

\bigskip
\section{Introduction and Statement of Results}\label{Section 1}

\medskip
Let $A$ be an abelian variety of dimension $n$, and let $L$ be an
ample line bundle over $A$.  Such a pair $(A,L)$ is called a {\it
polarized abelian variety}.  We are interested in studying the
projective normality of $(A,L)$, which plays an important role in
the theory of linear series associated to $(A,L)$.  For each
$r\geq 1$, we consider the multiplication map
\begin{equation}\label{1.1}\rho_r:Sym^r H^0(A,L)\to H^0(A,L^{\otimes
r})
\end{equation}
induced by $(\sigma_1,\cdots,\sigma_r)\to \sigma_1\cdots\sigma_r$ for $\sigma_1,\cdots,\sigma_r\in H^0(A,L)$.
Here $Sym^rH^0(A,L)$ denotes the $r$-fold symmetric tensor power of
$H^0(A,L)$.  Recall that $(A,L)$ (or simply $L$) is said to be {\it
projectively normal} if $\rho_r$ is surjective for each $r\geq 1$.
The projective normality of a polarized abelian variety $(A,L)$ is
well-understood in the case when $L$ is not primitive, i.e., when
there exists a line bundle $L^\prime$ such that $L =
L^{\prime\otimes m}$ for some integer $m \geq 2$ (cf. the references
in \cite{Iy}). However, not much is known for the case when $L$ is
primitive.

\medskip
In the primitive case, the main interest is to find conditions on
the polarization type $d_1|d_2| \cdots| d_n$ of $(A,L)$ or on
$h^0(A,L):=\dim_{\mathbb C}H^0(A,L) $ (note that $h^0(A,L)=d_1\cdots
d_n$) which will guarantee the projective normality of a {\it
general} $(A,L)$ of a given polarization type.  Along this line, J.
Iyer \cite{Iy} proved the following result:

\begin{thm}\label{Theorem 1.1} ([Iy, Theorem 1.2]) Let $(A,L)$ be a
polarized simple abelian variety of dimension $n$.  If $h^0(A,L)>2^n
n!$, then $L$ is projectively normal.
\end{thm}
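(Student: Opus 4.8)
The plan is to argue via the Seshadri constant of the polarization. Write $\epsilon(A,L)$ for the Seshadri constant of $L$ at a point of $A$; by homogeneity it is independent of the point. The first step is a criterion of the shape \emph{a large Seshadri constant forces projective normality}: if $\epsilon(A,L)$ exceeds an explicit bound of order $2n$, then $L$ is globally generated, the vanishing $H^1(A,M_L\otimes L)=0$ holds (where $M_L$ is the kernel bundle in $0\to M_L\to H^0(A,L)\otimes\sO_A\to L\to 0$), and hence $\rho_2$ is surjective; the classical propagation argument --- surjectivity of $\rho_2$ together with global generation of $L$ forces surjectivity of every $\rho_r$ --- then finishes. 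The vanishing is itself obtained in the usual way: pull back to the blow-up of a point $x\in A$, use that the lower bound on $\epsilon(A,L)$ makes an appropriate twist of the pulled-back $L$ sufficiently positive along the exceptional divisor, and apply a Nadel-type vanishing theorem. Granting this reduction, the theorem becomes the assertion that a \emph{simple} polarized abelian variety with $h^0(A,L)>2^n n!$ has $\epsilon(A,L)$ above that bound.

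For the Seshadri lower bound, recall $\epsilon(A,L)=\inf_C (L\cdot C)/({\rm mult}_x C)$, the infimum over irreducible curves $C$ through $x$, so one must bound $L\cdot C$ below by a fixed multiple of ${\rm mult}_x C$. Simplicity enters precisely here: the subtorus of $A$ generated by $C-C$ is $0$ or all of $A$, and since $C$ is a curve it is all of $A$, so $C$ lies in no translate of a proper subtorus and the $n$-fold addition morphism $\phi\colon C\times\cdots\times C\to A$ is surjective. A soft version of the wanted inequality already follows from surjectivity of $\phi$: the theorem of the cube controls $\phi^{*}L$, and an intersection computation then yields $L\cdot C\geq c(n)\,(n!\,h^0(A,L))^{1/n}$ when ${\rm mult}_x C=1$, though only with a non-optimal constant $c(n)$. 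The sharp, multiplicity-weighted bound is volumetric: equip $A$ with the flat K\"ahler metric of class $c_1(L)$, so that the volume of $C$ equals $L\cdot C$; then, using that $C$ is confined to no proper subtorus, one applies the sharp lower bound for the volume of a purely one-dimensional analytic subvariety inside a geodesic tubular neighbourhood of a subtorus --- to $C$ and the subtori through $x$, contributing ${\rm mult}_x C$ times a slice volume at each stage --- to obtain $L\cdot C\geq({\rm mult}_x C)\cdot c_n\cdot(n!\,h^0(A,L))^{1/n}$ with the correct constant $c_n$. Feeding $h^0(A,L)>2^n n!$ into this pushes the right-hand side above $2n\cdot{\rm mult}_x C$, i.e. past the threshold of the first step.

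The step I expect to be the real obstacle is the sharpness of that volume estimate and the bookkeeping over the subtori through $x$. The naive inputs --- the addition morphism, or volume monotonicity on a small metric ball --- lose a dimensional factor, and by themselves would prove projective normality only under a hypothesis weaker than $h^0(A,L)>2^n n!$ by an amount that grows with $n$; a ball is of no use because its radius is capped by the injectivity radius of the flat metric, so the tube around a positive-dimensional subtorus, which is long in the subtorus directions, cannot be avoided. One might hope to sidestep the geometry by specializing $(A,L)$ within its polarization type to a polarized product of elliptic curves and checking $\rho_2$ there by hand; but on such a product $\rho_2$ is surjective only when every elementary divisor $d_i$ is at least $3$, while a simple $(A,L)$ with $h^0(A,L)>2^n n!$ may perfectly well have $d_1\leq 2$, so the obstructing elliptic curves are an artifact of the splitting that the simplicity hypothesis exists to remove. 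The curve-geometric argument, resting on simplicity rather than genericity, therefore seems to be the one forced here.
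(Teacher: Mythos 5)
You should first note that the paper does not prove this statement at all: Theorem \ref{Theorem 1.1} is quoted from [Iy, Theorem 1.2], and (as the authors remark right after it) Iyer's proof goes through theta groups and the theory of theta functions. Your route is therefore entirely different from the source's, and it contains a gap that I do not believe can be closed along the lines you propose.

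The fatal step is the second one: the claimed lower bound $\epsilon(A,L)\geq c_n\,(n!\,h^0(A,L))^{1/n}$ for \emph{every} simple $(A,L)$, proved ``volumetrically'' via the tube estimate of Proposition \ref{Proposition 2.3}. That estimate is only valid for tube radii $r\leq\frac{1}{2}\sqrt{m(T,S,\omega)}$, where $m(T,S,\omega)$ is a \emph{metric} invariant of the flat K\"ahler structure determined by $c_1(L)$. Simplicity is a condition on the endomorphism algebra (equivalently, on the Hodge structure) and imposes no lower bound whatsoever on this quantity: within a fixed polarization type there are simple $(A,L)$ with $m(A,L)$ arbitrarily small, and then the tube (or ball) degenerates and the Federer-type argument yields only $L\cdot C\geq \frac{\pi}{4}\,m(A,L)\cdot \mathrm{mult}_x C$, which is worthless. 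This is exactly why the present paper proves a theorem about a \emph{general} $(A,L)$ rather than a simple one: Bauer's existence theorem (\ref{1.3}) is what supplies the metric input $m(A,L)\geq\frac{1}{\pi}\sqrt[n]{2L^n}$, and it supplies it only generically. The volume/Seshadri machinery trades the algebraic hypothesis ``simple'' for the metric hypothesis ``$m(A,L)$ large'', and these are not comparable; the only Seshadri bound valid for all simple abelian varieties in higher dimension is of Nakamaye type ($\epsilon(A,L)\geq 1$, improved only slightly by simplicity), far below the threshold $2n$ you need. Your first step is also not free --- a criterion ``$\epsilon(A,L)>2n$ implies projective normality'' is itself a nontrivial theorem, and the standard reduction runs through the ideal sheaf of the diagonal in $A\times A$ as in Section \ref{Section 4}, where the relevant quantity is the Seshadri constant of $p_1^*L\otimes p_2^*L$ along $D$ rather than the pointwise $\epsilon(A,L)$ --- but that is secondary. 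To prove Iyer's theorem as stated, for all simple $(A,L)$, one has to abandon the metric approach and argue with theta groups as he does.
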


See also \cite{FG} for related results in the lower dimensional cases
when $n=3,4$. These works use the theory of theta functions and
theta groups.

\medskip
Our goal is to relate this problem to the Buser-Sarnak invariant
$m(A,L)$ of the polarized abelian variety (cf. [L2, p.291]).  Since $A$ is a
compact complex torus, one may write $A=\mathbb C^n/\Lambda$, where
$\Lambda$ is a lattice in $\mathbb C^n$.  It is well-known that
there exists a unique translation-invariant flat K\"ahler form
$\omega$ on $A$ such that $c_1(L)=[\omega]\in H^2(A,\mathbb Z)$. The
real part of $\omega$ gives rise to an inner product
$\langle~,~\rangle$ on $\mathbb C^n$, and we denote by $\Vert~\Vert$
the associated norm on $\mathbb C^n$.  The Buser-Sarnak invariant is
given by
\begin{equation}\label{1.2}m(A,L):=
\min_{ \lambda \in \Lambda\setminus\{0\} }\Vert\lambda\Vert^2.
\end{equation}
In other words, $m(A,L)$ is the square of the minimal length of a
non-zero lattice vector in $\Lambda$ with respect to
$\langle~,~\rangle$.  The study of this invariant was initiated by
Buser and Sarnak in \cite{BS}, where they studied it
 for principally polarized abelian varieties and Jacobians. In particular, they showed the existence of a principally polarized abelian
variety $(A,L)$ with
\begin{equation}\label{1.3}m(A, L) \geq\dfrac{1}{\pi}
\root n \of {2 L^n}.
\end{equation}
In [Ba], Bauer
generalized this to abelian varieties of arbitrary polarization type
(cf. [L2, p. 292-293]).

\medskip
The relevance of  the invariant $m(A,L)$ in the study of
algebro-geometric questions was first observed by Lazarsfeld
\cite{L1}, where he obtained a lower bound for the Seshadri number
of $(A,L)$ in terms of $m(A,L)$ (cf. [L2, p. 293]).  In
particular, $m(A,L)$ gives information on generation of jets by
$H^0(A,L)$. Furthermore, Bauer used the existence of $(A,L)$
satisfying (\ref{1.3}) together with Lazarsfeld's above result to
obtain the following result:

\begin{thm}\label{Theorem 1.2} ([Ba, Corollary
2]) Let $(A,L)$ be a general $n$-dimensional polarized abelian
variety of a given polarization type.  If $\displaystyle h^0(A,L)
\geq \frac{8^n}{2} \cdot \frac{n^n}{n!}$, then $L$ is very ample.
\end{thm}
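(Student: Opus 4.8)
\smallskip
\noindent\textbf{Proof strategy.}
The plan is to derive very ampleness of $L$ from a lower bound on its Seshadri number $\epsilon(A,L)$, by chaining the Buser-Sarnak bound with Lazarsfeld's estimate, as indicated just before the statement. Since $L$ is ample, $h^0(A,L)=\chi(A,L)=L^n/n!$, so the hypothesis $h^0(A,L)\ge\frac{8^n}{2}\cdot\frac{n^n}{n!}$ is equivalent to $\sqrt[n]{2L^n}\ge 8n$; and as $L^n=n!\,d_1d_2\cdots d_n$ depends only on the polarization type $d_1|d_2|\cdots|d_n$, this inequality holds for \emph{every} $(A,L)$ of the given type.

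Next I would invoke Bauer's extension of the Buser-Sarnak inequality (\ref{1.3}) to arbitrary polarization type to produce one polarized abelian variety $(A_0,L_0)$ of that type with $m(A_0,L_0)\ge\frac{1}{\pi}\sqrt[n]{2L_0^n}\ge\frac{8n}{\pi}$. Because very ampleness of the polarization is an open condition in a family of polarized abelian varieties of fixed type, it then suffices to prove that this single $(A_0,L_0)$ is very ample; the statement for a general member of the type follows. Applying Lazarsfeld's bound $\epsilon(A_0,L_0)\ge\frac{\pi}{4}m(A_0,L_0)$ gives $\epsilon(A_0,L_0)\ge 2n$.

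To upgrade this to very ampleness I would use the refinement of Lazarsfeld's volume (tubular-neighbourhood) argument that controls the local positivity of $L_0$ at a $0$-dimensional subscheme $Z\subset A_0$ of length at most $2$, yielding that the associated Seshadri number $\epsilon(A_0,L_0;Z)$ is again at least $2n$ — and in fact strictly greater, the strictness coming either from integrality of $h^0$ or from passing to a slightly more general member of the type. Since $K_{A_0}=\mathcal O_{A_0}$, the standard Seshadri-type criterion for separation of points and jets — blow up $Z$ and apply a vanishing theorem to an appropriate twist of the pullback of $L_0$ by a multiplier ideal, using $\epsilon(A_0,L_0;Z)>2n$ — then shows that the restriction $H^0(A_0,L_0)\to H^0(Z,L_0|_Z)$ is surjective for every length-$2$ subscheme $Z$. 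Letting $Z$ run over pairs of distinct points and over first-order data at a point, this says exactly that $L_0$ separates points and tangent vectors, i.e. that $L_0$ is very ample.

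I expect the main obstacle to be the separation of \emph{pairs of distinct points}: Lazarsfeld's single-point bound does not apply verbatim, since the difference of the two points may be arbitrarily close to a shortest lattice vector, so one must re-run the period/volume estimate for the two-point Seshadri number, handling two nearby points by a semicontinuity argument that falls back on the (non-degenerate) bound at the fat point in the closure. Jet separation at a single point — the part using Lazarsfeld's estimate directly — is by comparison the easy half; and the numerology ($\sqrt[n]{2L^n}\ge 8n\Rightarrow m(A,L)\ge\frac{8n}{\pi}\Rightarrow\epsilon(A,L)\ge 2n$) is precisely what pins down the exponent $8$ and the bound on $h^0$ in the statement.
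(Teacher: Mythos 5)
The paper does not actually prove this statement: Theorem \ref{Theorem 1.2} is quoted from Bauer [Ba, Corollary 2], and the only indication of its proof given here is the sentence preceding it, namely that it follows by combining the existence of $(A,L)$ satisfying (\ref{1.3}) with Lazarsfeld's lower bound $\epsilon(A,L)\geq\frac{\pi}{4}\,m(A,L)$. Your outline follows exactly that route --- the numerology $\sqrt[n]{2L^n}\geq 8n\Rightarrow m(A,L)\geq 8n/\pi\Rightarrow\epsilon(A,L)\geq 2n$, openness of very ampleness in the family, and a Seshadri-type very-ampleness criterion --- so it is consistent with the intended argument. The one place where your sketch drifts from the standard execution is the two-point separation step: one does not need to re-run the period/volume estimate for a two-point Seshadri constant or invoke semicontinuity for nearby points; instead the one-point bound $\epsilon(L;x)>2n$ at each of the two points is fed into the Demailly--Nadel multiplier-ideal criterion (for each point $x_i$ construct an effective $\Q$-divisor $D_i\equiv\frac{1-\delta}{2}L$ with an isolated non-klt center at $x_i$, then apply Nadel vanishing to $K_A\otimes L\otimes\sJ(D_1+D_2)$ with $K_A$ trivial), which separates arbitrary pairs of points in one stroke; the residual issue you correctly flag --- that the hypothesis only yields $\epsilon\geq 2n$ rather than $\epsilon>2n$ --- is genuine and is resolved in Bauer's paper rather than by the heuristics you suggest.
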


\medskip
Now we state our main result in this
paper as follows:

\begin{thm}\label{Theorem 1.3}
A general $n$-dimensional polarized abelian variety $(A,L)$ of a given polarization
type and satisfying $\displaystyle h^0(A, L) \geq \dfrac{8^n}{2}
\cdot \dfrac{n^n}{ n ! }$ is projectively normal.
\end{thm}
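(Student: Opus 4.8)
\emph{Overview and reduction.} The plan is to express the surjectivity of the maps $\rho_r$ as a cohomology vanishing on the self-product $A\times A$, to reduce that vanishing — via blowing up the diagonal — to Kawamata--Viehweg vanishing, and to verify the needed positivity by feeding the sharp volume estimate (advertised in the abstract) the lower bound $m(A,L)\ge\frac1\pi\sqrt[n]{2L^n}$ furnished, for a general $(A,L)$ of the given polarization type, by Bauer's generalization of \eqref{1.3}. Concretely: $\rho_1$ is an isomorphism, and for $r\ge2$ an easy induction shows that, once $\rho_{r-1}$ is surjective, surjectivity of $\rho_r$ is equivalent to surjectivity of the multiplication map $H^0(A,L)\otimes H^0(A,L^{\otimes(r-1)})\to H^0(A,L^{\otimes r})$. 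So it is enough to prove the latter for every $r\ge2$. Let $p_1,p_2\colon A\times A\to A$ be the projections, $\Delta\subset A\times A$ the diagonal subtorus, and set $\sL_r:=p_1^*L^{\otimes(r-1)}\otimes p_2^*L$. By the K\"unneth formula and Mumford's vanishing for ample line bundles on abelian varieties, $H^1(A\times A,\sL_r)=0$, while $H^0(A\times A,\sL_r)=H^0(A,L^{\otimes(r-1)})\otimes H^0(A,L)$ and $\sL_r|_\Delta=L^{\otimes r}$, and restriction to $\Delta$ is exactly the multiplication map in question. Hence, from $0\to\sI_\Delta\otimes\sL_r\to\sL_r\to\sL_r|_\Delta\to0$, it remains to prove $H^1(A\times A,\sI_\Delta\otimes\sL_r)=0$ for all $r\ge2$ (the case $n=1$, where $h^0(A,L)\ge 4$ forces $\deg L\ge 3$, is classical; assume $n\ge2$).

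\emph{Blow-up and Kawamata--Viehweg vanishing.} Let $\pi\colon X\to A\times A$ be the blow-up along $\Delta$ with exceptional divisor $E$. Since $\pi_*\sO_X(-E)=\sI_\Delta$ and $R^q\pi_*\sO_X(-E)=0$ for $q\ge1$, the Leray spectral sequence gives $H^1(A\times A,\sI_\Delta\otimes\sL_r)\cong H^1\bigl(X,\pi^*\sL_r\otimes\sO_X(-E)\bigr)$. As $K_{A\times A}=\sO$ we have $K_X=(n-1)E$, so $\pi^*\sL_r\otimes\sO_X(-E)=K_X\otimes\bigl(\pi^*\sL_r\otimes\sO_X(-nE)\bigr)$, and by Kawamata--Viehweg vanishing it suffices that $\pi^*\sL_r-nE$ be nef and big. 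For bigness, write $\pi^*\sL_r-nE=(\pi^*\sL_2-nE)+\pi^*p_1^*L^{\otimes(r-2)}$ with the second summand nef, reducing to $r=2$; using that $N_{\Delta/A\times A}$ is trivial (so $E\cong\Delta\times\BP^{n-1}$, with $\sO_X(E)|_E=\sO_E(-1)$), a direct computation gives $(\pi^*\sL_2-nE)^{2n}=\binom{2n}{n}L^n\bigl(L^n-(2n)^n\bigr)$, which is positive because $L^n=n!\,h^0(A,L)\ge\tfrac12(8n)^n>(2n)^n$. Likewise, nefness of $\pi^*\sL_r-nE$ for $r\ge2$ reduces to nefness of $\pi^*\sL_2-nE$.

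\emph{The main point: nefness of $\pi^*\sL_2-nE$.} On curves inside $E\cong\Delta\times\BP^{n-1}$ this is immediate from ampleness of $\sL_2|_\Delta=L^{\otimes2}$ and nefness of $\sO_{\BP^{n-1}}(1)$. For an irreducible curve $C\subset X$ not contained in $E$, with image $\bar C:=\pi(C)$, nefness amounts to $\sL_2\cdot\bar C\ge n\,(E\cdot C)$; if $\bar C$ lies in a translate of $\Delta$ then $\bar C\cap\Delta=\emptyset$, $E\cdot C=0$, and there is nothing to prove, so assume otherwise. Here the Buser--Sarnak invariant enters. Equip $A\times A$ with the flat K\"ahler metric representing $c_1(\sL_2)$. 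A general $(A,L)$ of the given polarization type satisfies $m(A,L)\ge\frac1\pi\sqrt[n]{2L^n}\ge\frac{8n}\pi$ (using $L^n\ge\tfrac12(8n)^n$), and in the universal cover $\C^{2n}=\C^n\times\C^n$ the distinct components of the preimage of $\Delta$ — the affine translates of the diagonal by $\Lambda\times\Lambda$ — are at mutual distance $\ge\sqrt{m(A,L)/2}$; hence for every $\delta$ with $\delta^2<m(A,L)/8$ the geodesic tubular neighbourhood $\sT_\delta(\Delta)$ lifts to a disjoint union of tubes. Applying the sharp volume lower bound of this paper to the purely one-dimensional analytic set $\bar C\cap\sT_\delta(\Delta)$, and using that $c_1(\sL_2)$ is a positive form, we get $\sL_2\cdot\bar C=\int_{\bar C}c_1(\sL_2)\ge\int_{\bar C\cap\sT_\delta(\Delta)}c_1(\sL_2)\ge\pi\delta^2\,(E\cdot C)$; letting $\delta^2\uparrow m(A,L)/8$ yields $\sL_2\cdot\bar C\ge\frac{\pi\,m(A,L)}8\,(E\cdot C)\ge n\,(E\cdot C)$, as required. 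Combining the three steps, $H^1(X,\pi^*\sL_r\otimes\sO_X(-E))=0$ for all $r\ge2$, so $\rho_r$ is surjective for all $r\ge1$ and $(A,L)$ is projectively normal; since projective normality is an open condition in families of fixed polarization type, the same holds for a general such $(A,L)$.

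\emph{Where the difficulty lies.} The routine part is the homological bookkeeping and the elementary intersection computation on $X$; the real content is the volume estimate used in the third step — one must show that a one-dimensional analytic subvariety sitting in a thin geodesic tube around a subtorus cannot have volume too small relative to how many times it meets (or winds around) the subtorus. The geometric mechanism is that the Buser--Sarnak lower bound forces the thin tube to lift to \emph{disjoint} tubes in the universal cover, which localizes the problem and makes available a monotonicity (Lelong-type) estimate for the flat metric; its sharpness, attained by flat disks normal to the subtorus, is precisely what makes the numerical threshold $h^0(A,L)\ge\frac{8^n}{2}\cdot\frac{n^n}{n!}$ match the one in Theorem \ref{Theorem 1.2}.
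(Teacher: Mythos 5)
Your proposal is correct and follows essentially the same route as the paper: the sharp volume bound in geodesic tubes around the diagonal, combined with Bauer's lower bound $m(A,L)\ge\frac{1}{\pi}\sqrt[n]{2L^n}$, yields nefness of $\pi^*(p_1^*L\otimes p_2^*L)-nE$ (the paper's Proposition \ref{Proposition 3.2} via Lemma \ref{Lemma 3.1}), the intersection computation gives bigness, and Kawamata--Viehweg vanishing on the blow-up gives surjectivity of the multiplication maps. The only (harmless) divergence is that you handle $r\ge 3$ by an explicit induction with the asymmetric bundles $p_1^*L^{\otimes(r-1)}\otimes p_2^*L$, whereas the paper simply invokes Iyer's result that surjectivity of $\rho_2$ implies surjectivity of all $\rho_r$.
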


\medskip
Using Stirling's formula ($n!\sim \sqrt{2\pi}\,n^{n+\frac
12}e^{-n}$), one easily sees that our bound in Theorem
\ref{Theorem 1.3} improves Iyer's bound in Theorem \ref{Theorem
1.1} substantially for large $n$. Note that our bound in Theorem
\ref{Theorem 1.3} for projective normality is the same as Bauer's
bound in Theorem \ref{Theorem 1.2} for very ampleness. To our
knowledge, this is just a coincidence. Although the proofs of both
theorems use Bauer's generalization of (\ref{1.3}), Theorem
\ref{Theorem 1.2} itself is not used in the proof of Theorem
\ref{Theorem 1.3}.
 Finally it is worth
comparing Theorem \ref{Theorem 1.3} with the result in \cite{FG}
and \cite{Ru} that there is a polarization type $d_1|d_2| \cdots|
d_n$ with $\displaystyle d_1\cdots d_n=h^0(A,L) = \dfrac{4^n}{2}$
such that no abelian varieties of this polarization type is
projectively normal.

\medskip
We describe briefly our approach as follows.  First we obtain an
auxiliary result, which is a sharp lower bound for the volume of a
purely one-dimensional complex analytic subvariety in a geodesic tubular
neighborhood of a subtorus of a compact complex torus (see Proposition \ref{Proposition 2.3} for the
precise statement).  As a consequence, we obtain a lower bound of
the Seshadri number of the line bundle $p_1^*L \otimes p_2^*L$ along
the diagonal of $A\times A$ in terms of $m(A,L)$ (see Proposition \ref{3.2}).
Here $p_i:A\times A\to A$ denotes the projection onto the $i$-th
factor, $i=1,2$.   We believe that these two auxiliary results are
of independent interest beside their application to the projective
normality problem. Finally the proof of Theorem \ref{Theorem 1.3}
involves the use of the second auxiliary result and applying Bauer's
result mentioned above in (\ref{1.3}).

\bigskip
\section{Volume of subvarieties near a complex subtorus}\label{Section 2}

\medskip
In this section, we are going to obtain a sharp lower bound for
the volume of a purely $1$-dimensional complex analytic subvariety
in a tubular open neighborhood of a subtorus of a compact complex
torus (see Proposition \ref{Proposition 2.3}). This inequality is
inspired by an analogous inequality in the hyperbolic setting
proved in \cite{HT}. The proof of the current case is much simpler
than the one in \cite{HT}, using a simple projection argument and
Federer's volume inequality for analytic subvarieties in a
Euclidean ball in $\mathbb C^n$ (cf. e.g. [St] or [L2, p. 300]).

\medskip
Let $T=\mathbb C^n/\Lambda$ be an $n$-dimensional compact complex torus associated to a lattice $\Lambda\subset \mathbb C^n$
and endowed with a flat translation-invariant K\"ahler form
$\omega$. For simplicity, we call $(T,\,\omega)$ a {\it polarized
compact complex torus}.  Let $\langle~,~\rangle$ and $\Vert~\Vert$
be the inner product and norm on $\mathbb C^n$ associated to $\omega$ as in
Section \ref{Section 1}.  Next we let $S$ be a $k$-dimensional compact complex
subtorus of $T$, where $0\leq k<n$. It is well-known that $S$ is the
quotient of a $k$-dimensional linear subspace $F\cong \mathbb C^k$
of $\mathbb C^n$ by a sublattice $\Lambda_S\subset\Lambda$ of rank
$2k$ and such that $\Lambda_S=\Lambda\cap F$.  Let $F^{\perp}$ be
the orthogonal complement of $F$ in $\mathbb C^n$ with respect to
$\langle~,~\rangle$, and let $q_F:\mathbb C^n\to F$ and
$q_{F^\perp}:\mathbb C^n\to F^{\perp}$ denote the associated unitary
projection maps. Similar to (\ref{1.2}), we define the {\it relative
Buser-Sarnak invariant} $m(T,\,S,\,\omega)$ given by
\begin{equation}\label{2.1}m(T,\,S,\,\omega):=\min_{\lambda\in\Lambda\setminus\Lambda_S}\Vert
q_{F^{\perp}}(\lambda)\Vert^2.
\end{equation}
In other words, $m(T,\,S,\,\omega)$ is the
square of the minimal distance of a vector in
$\Lambda\setminus\Lambda_S$ from the linear subspace $F$.

\medskip\noindent
\begin{remark}\label{Remark 2.1}
(i) The invariant $m(A,L)$ in (\ref{1.2}) corresponds to the special case when $S=\{0\}$ and $[\omega]=c_1(L)$, i.e., $m(A,L)=m(A,\{0\},\omega)$.
\par\noindent
(ii)  From the discreteness of $\Lambda$, the equality $\Lambda_S=\Lambda\cap F$ and the compactness of $S=F/\Lambda_S$, one easy checks that $m(T,\,S,\,\omega)>0$ and its value is attained by some $\lambda\in\Lambda\setminus\Lambda_S$.
\end{remark}

\medskip
With
regard to the Riemannian geometry associated to $\omega$, one also
easily sees that the geodesic distance function $d_T:T\times T\to
\mathbb R$ of $T$ with respect
to $\omega$ can be expressed in terms of $\Vert~\Vert$ given
by
\begin{equation}\label{2.2}
d_T(x,y)=\inf \{ \Vert z-w\Vert\,\big|\, p(z)=x,\,p(w)=y\},
\end{equation}
where $p:\mathbb C^n\to T$ denotes the covering projection map.
For any given $r>0$, we consider
the open subset of $T$ given by
\begin{equation}\label{2.3}W_r:=\{x\in
T,\big|\, d_T(x,S)<r\}\supset S,
\end{equation}
where as usual,
\begin{equation}\label{2.4}
d_T(x,S):=\inf_{y\in S}d_T(x,y)=
\min\{\Vert q_{F^{\perp}}(z)\Vert\,\big|\, p(z)=x\}
\end{equation}
(note that the second equality in (\ref{2.4}) follows from standard facts on inner product spaces, and as in Remark \ref{Remark 2.1}, the minimum value in the last expression in (\ref{2.4}) is attained by some $z$).  We simply call $W_r$ the {\it geodesic tubular
neighborhood of $S$ in $T$} of radius $r$. Next we consider the
biholomorphism $\widetilde{\phi}:F\times F^{\perp}\to \mathbb C^n$ given by
\begin{equation}\label{2.5}\widetilde{\phi}(z_1,z_2)=z_1+z_2\quad\textrm{for }(z_1,z_2)\in F\times F^{\perp},\end{equation}
It is easy to see that the covering projection map $p\circ\widetilde{\phi}:F\times F^{\perp}\to T$
is equivariant under the action of $\Lambda_S$ on $F\times F^{\perp}$ given by $(z_1,z_2)\to (z_1+\lambda,z_2)$ for
$(z_1,z_2)\in F\times F^{\perp}$ and $\lambda \in \Lambda_S$.  It follows readily that $p\circ\widetilde{\phi}$ descends to a well-defined covering projection map denoted by $\phi:S\times
F^{\perp}\to T$ (in particular, $\phi$ is a local biholomorphism).
Consider the flat translation-invariant K\"ahler form on $\mathbb
C^n$ given by
\begin{equation}\label{2.6}
\omega_{\mathbb
C^n}:=\frac{\sqrt{-1}}{2}\partial\overline{\partial}\Vert
z\Vert^2,\quad z\in\mathbb C^n,
\end{equation}
which is easily seen to descend to the K\"ahler form $\omega$ on
$T$. Consider also the flat K\"ahler form on $F^{\perp}$ given by
$\omega_{F^{\perp}}:=\omega_{\mathbb C^n}\big|_{F^{\perp}}$, and for
any $r>0$, let $B_{F^{\perp}}(r):=\{z\in F^{\perp}\,\big|\, \Vert
z\Vert<r\}$ denote the associated open ball of radius $r$. Let
$\omega_S:=\omega\big|_S$.  Note that $\phi\big|_{S\times\{0\}}$ is
given by the identity map on $S$.  It admits
biholomorphic extensions as follows:

\medskip\noindent
\begin{lemma}\label{Lemma 2.2}
For any real number $r$ satisfying
$0<r\leq\frac{\sqrt{m(T,\,S,\,\omega)}}{2}$, one has a biholomorphic isometry
\begin{equation}\label{2.7}
\phi_r:(S,\,\omega_S)\times
(B_{F^{\perp}}(r),\,\omega_{F^{\perp}}\big|_{B_{F^{\perp}}(r)})
\to(W_r,\, \omega\big|_{W_r})
\end{equation}
given by $\phi_r:=\phi\big|_{S\times B_{F^{\perp}}(r)}$.
\end{lemma}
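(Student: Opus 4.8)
The plan is to establish the three ingredients of the claim in turn: that $\phi_r$ maps into $W_r$, that $\phi_r$ is onto $W_r$, and that $\phi_r$ is injective; the fact that $\phi_r$ is a local biholomorphic isometry is then essentially free from the constructions already set up, so the bijectivity statements are the real content, and among them injectivity is the one that genuinely uses the size restriction on $r$.

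First I would record that $\phi$ is a \emph{local} isometry from $(S,\omega_S)\times(F^{\perp},\omega_{F^{\perp}})$ onto $(T,\omega)$. Since $F\perp F^{\perp}$, for $(z_1,z_2)\in F\times F^{\perp}$ one has $\Vert z_1+z_2\Vert^2=\Vert z_1\Vert^2+\Vert z_2\Vert^2$, so $\widetilde{\phi}^*\omega_{\mathbb C^n}=(\omega_{\mathbb C^n}\big|_F)\times\omega_{F^{\perp}}$; passing to the quotient by $\Lambda_S$ (under which $p\circ\widetilde{\phi}$ is equivariant and descends to $\phi$) gives $\phi^*\omega=\omega_S\times\omega_{F^{\perp}}$, because $\omega_{\mathbb C^n}\big|_F$ descends to $\omega_S$ on $S=F/\Lambda_S$. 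Thus any restriction of the covering map $\phi$, in particular $\phi_r=\phi\big|_{S\times B_{F^{\perp}}(r)}$, is automatically a local biholomorphic isometry, and it remains only to prove that $\phi_r$ is a bijection onto $W_r$; a bijective local biholomorphism is then a global biholomorphism.

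For the image, given $(s,z_2)\in S\times B_{F^{\perp}}(r)$, pick $\widetilde{z}_1\in F$ with $p(\widetilde{z}_1)=s$; then $\phi_r(s,z_2)=p(\widetilde{z}_1+z_2)$ and $q_{F^{\perp}}(\widetilde{z}_1+z_2)=z_2$, so by (\ref{2.4}) one gets $d_T(\phi_r(s,z_2),S)\le\Vert z_2\Vert<r$, i.e.\ $\phi_r(s,z_2)\in W_r$. Conversely, if $x\in W_r$, then (\ref{2.4}) supplies $z\in\mathbb C^n$ with $p(z)=x$ and $\Vert q_{F^{\perp}}(z)\Vert<r$; writing $z=q_F(z)+q_{F^{\perp}}(z)$ and letting $s\in S$ be the image of $q_F(z)\in F$, we see $x=\phi_r(s,q_{F^{\perp}}(z))$, so $\phi_r$ is surjective onto $W_r$.

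The key step is injectivity, and this is precisely where the hypothesis $r\le\frac{\sqrt{m(T,S,\omega)}}{2}$ is used. Suppose $\phi_r(s,z_2)=\phi_r(s',z_2')$, and lift $s,s'$ to $\widetilde{z}_1,\widetilde{z}_1'\in F$; then $\lambda:=(\widetilde{z}_1+z_2)-(\widetilde{z}_1'+z_2')\in\Lambda$. Applying $q_{F^{\perp}}$ and using $\widetilde{z}_1,\widetilde{z}_1'\in F$ gives $q_{F^{\perp}}(\lambda)=z_2-z_2'$, so by the triangle inequality $\Vert q_{F^{\perp}}(\lambda)\Vert\le\Vert z_2\Vert+\Vert z_2'\Vert<2r\le\sqrt{m(T,S,\omega)}$. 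By the definition (\ref{2.1}) of $m(T,S,\omega)$ this is impossible unless $\lambda\in\Lambda_S\subset F$, and then $z_2-z_2'=q_{F^{\perp}}(\lambda)=0$ while $\widetilde{z}_1-\widetilde{z}_1'=\lambda\in\Lambda_S$, so $s=s'$ (recall $S=F/\Lambda_S$). Hence $\phi_r$ is injective, and combined with the previous paragraphs and the isometry property this finishes the proof. The only delicate point worth flagging is arranging the argument so that the elementary estimate $\Vert z_2-z_2'\Vert<2r$ meets the defining minimum (\ref{2.1}) exactly at the threshold $2r\le\sqrt{m(T,S,\omega)}$; everything else is routine bookkeeping with the identifications $\Lambda_S=\Lambda\cap F$ and $S=F/\Lambda_S$.
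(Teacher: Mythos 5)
Your proposal is correct and follows essentially the same route as the paper's proof: the same orthogonal decomposition $\Vert z_1+z_2\Vert^2=\Vert z_1\Vert^2+\Vert z_2\Vert^2$ for the isometry, the same lifting argument for surjectivity via (\ref{2.4}), and the same triangle-inequality estimate $\Vert q_{F^{\perp}}(\lambda)\Vert<2r\le\sqrt{m(T,S,\omega)}$ against the definition (\ref{2.1}) for injectivity. The only cosmetic difference is that you phrase injectivity contrapositively (concluding $\lambda\in\Lambda_S$) where the paper argues by contradiction.
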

\medskip\noindent
{\it Proof.}
First we fix a real number $r$ satisfying
$0<r\leq\frac{\sqrt{m(T,\,S,\,\omega)}}{2}$.  From (\ref{2.3}), (\ref{2.4}) and the obvious identity $q_{F^{\perp}}(\widetilde{\phi}(z_1,z_2))=z_2$ for $(z_1,z_2)\in F\times F^{\perp}$, one easily sees that $\phi(S\times B_{F^{\perp}}(r))\subset W_r$, and thus the map $\phi_r$ in (\ref{2.7}) is well-defined.  For each $x\in W_r$, it follows from the second equality in (\ref{2.4}) that there exists $z\in \mathbb C^n$ such that $p(z)=x$ and $\Vert q_{F^{\perp}}(z)\Vert=d_T(x, S)<r$.  Now, $q_F(z)$ descends to a point $x_S$ in $S$, and one easily sees that $\phi_r(x_S, q_{F^{\perp}}(z))=x$ with $(x_S, q_{F^{\perp}}(z))\in S\times B_{F^{\perp}}(r)$.  Thus $\phi_r$ is surjective.  Next we are going to prove by contradiction that $\phi_r$ is injective.  Suppose $\phi_r$ is not injective.  Then it implies readily that there exist
two points $(z_1,z_2)$, $(z_1^\prime,z_2^\prime)\in  F\times B_{F^{\perp}}(r)$ such that
\par (i) either $z_1-z_1^\prime\notin\Lambda_S$ or $z_2\neq z_2^\prime$; and
\par (ii) $z_1+z_2-(z_1^\prime+z_2^\prime)=\lambda$ for some $\lambda\in \Lambda$
\par\noindent
(here (i) means that $(z_1,z_2)$, $(z_1^\prime,z_2^\prime)$ descend to two different points in $S\times B_{F^{\perp}}(r))$.  In both cases in (i), one easily checks that $\lambda\in\Lambda\setminus\Lambda_S$.  On the other hand, one also sees from (ii) that
$q_{F^{\perp}}(\lambda)=z_2-z_2^\prime$ and thus
\begin{equation}\label{2.8}
\Vert
q_{F^{\perp}}(\lambda)\Vert\leq \Vert z_2\Vert+\Vert z_2^\prime\Vert<r+r=2r\leq\sqrt{m(T,\,S,\,\omega)},
\end{equation}
which contradicts the definition of $m(T,\,S,\,\omega)$ in (\ref{2.1}).  Thus, $\phi_r$ is injective, and we have proved that $\phi_r$ is a bihomorphism.
Finally from the obvious identity $\Vert z_1\Vert^2+\Vert z_2\Vert^2=\Vert z_1+z_2\Vert^2$ for $(z_1,z_2)\in F\times F^{\perp}$, and upon taking $\frac{\sqrt{-1}}{2}\partial\overline\partial$, one easily sees that $\widetilde{\phi}:(F,\,\omega_F)\times (F^{\perp},\,\omega_{F^{\perp}})\to(\mathbb
C^n,\,\omega_{\mathbb
C^n})$ is a biholomorphic isometry (cf. (\ref{2.6})).  It follows readily that the induced covering projection map $\phi:(S,\,\omega_S)\times (F^{\perp},\,\omega_{F^{\perp}})\to (T,\,\omega)$ is a local isometry.  Upon restricting $\phi$ to $S\times B_{F^{\perp}}(r)$, one sees that the biholomorphism $\phi_r$ is an isometry.
\qed

\medskip
For each $x\in S$ and each non-zero holomorphic tangent vector
$v\in {\rm T}_{x,T}$ orthogonal to ${\rm T}_{x,S}$, it is easy to
see that there exists a unique 1-dimensional totally geodesic
(flat) complex submanifold $\mathcal \ell$ of
$W_{\frac{\sqrt{m(T,\,S,\,\omega)}}{2}}$ passing through $x$ and
such that ${\rm T}_{x,\mathcal \ell}=\mathbb Cv$. We simply call
such $\mathcal \ell$ an {\it $S$-orthogonal line} of
$W_{\frac{\sqrt{m(T,\,S,\,\omega)}}{2}}$.  For a complex analytic
subvariety $V$ in an open subset of $T$, we simply denote by
$\textrm{Vol}\,(V)$ its volume with respect to the K\"ahler form
$\omega$, unless otherwise stated.  It is easy to see that for
each $0<r\leq\frac{\sqrt{m(T,\,S,\,\omega)}}{2}$, the values of
$\textrm{Vol}\,(\mathcal \ell\cap W_r)$ are the same for all the
$S$-orthogonal lines $\mathcal \ell$ in
$W_{\frac{\sqrt{m(T,\,S,\,\omega)}}{2}}$.  As such, $
\textrm{Vol}\,(\mathcal \ell\cap {W_r})$ is an unambiguously
defined number depending on $r$ only (cf. (\ref{2.9}) below). Next
we consider the blow-up $\pi:\widetilde{T}\to T $ of $T$ along
$S$, and denote the associated exceptional divisor by
$E:=\pi^{-1}(S)$.  For a complex analytic subvariety $V$ in an
open subset of $T$ such that $V$ has no component lying in $S$, we
denote its strict transform with respect to $\pi$ by $\widetilde{
V}:=\overline{\pi^{-1}(V\setminus S})$. As usual, for an $\mathbb
R$-divisor $\Gamma$ and a complex curve $C$ in a complex manifold,
we denote by $\Gamma \cdot C$ the intersection number of $\Gamma$
with $C$.  Our main result in this section is the following

\begin{proposition}\label{Proposition 2.3}
Let $(T,\,\omega)$ a polarized compact complex torus of dimension
$n$, and let $S$ be a $k$-dimensional compact complex subtorus of $T$, where $0\leq k<n$.  Let
$\pi:\widetilde{T}\to T $ be the blow-up of $T$ along $S$ with the
exceptional divisor $E=\pi^{-1}(S)$ as above.  Then for any real
number $r$ satisfying $0<r\leq\frac{\sqrt{m(T,\,S,\,\omega)}}{2}$
and any purely $1$-dimensional complex analytic subvariety $V$ of
the geodesic tubular neighborhood $W_r$ of $S$ such that $V$ has no component lying in $S$, one has
\begin{eqnarray}\label{2.9}
\textrm{Vol}\,(V)&\geq& \pi r^2\cdot(\widetilde V\cdot E)\\
~&=& \textrm{Vol}\,(\mathcal \ell\cap {W_r})\cdot(\widetilde V\cdot
E).\nonumber
\end{eqnarray}
In particular, for each $0<r\leq\frac{\sqrt{m(A,\,S,\,\omega)}}{2}$
and each non-negative value $s$ of $\widetilde V\cdot E$, the lower
bound in (\ref{2.9}) is attained by the volume of some (and hence
any) $V$ consisting of the intersection of ${W_r}$ with the union of
$s$ copies of $S$-orthogonal lines counting multiplicity.
\end{proposition}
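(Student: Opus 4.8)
The plan is to use the biholomorphic isometry $\phi_r$ of Lemma~\ref{Lemma 2.2} to replace $(W_r,\,\omega)$ by the product $S\times B_{F^{\perp}}(r)$, then to bound $\textrm{Vol}\,(V)$ from below by the area (with multiplicity) of the image of $V$ under the projection to the ball factor, and finally to apply Federer's volume inequality in that ball. Concretely, I would put $V':=\phi_r^{-1}(V)$, a purely $1$-dimensional closed analytic subvariety of $S\times B_{F^{\perp}}(r)$ with no component lying in $S\times\{0\}$ (because $\phi_r$ restricts to the identity on $S\times\{0\}\cong S$, and $V$ has no component in $S$), and I would let $\textrm{pr}\colon S\times B_{F^{\perp}}(r)\to B_{F^{\perp}}(r)$ be the second-factor projection. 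By Lemma~\ref{Lemma 2.2}, $\phi_r^{\,*}(\omega\big|_{W_r})$ is the product K\"ahler form, which dominates $\textrm{pr}^{*}(\omega_{F^{\perp}})$ as a $(1,1)$-form (their difference being the pullback of $\omega_S$, a non-negative form), so that
\begin{equation}\label{eqn:pn}
\textrm{Vol}\,(V)=\int_{V'}\phi_r^{\,*}(\omega)\;\geq\;\int_{V'}\textrm{pr}^{*}(\omega_{F^{\perp}}).
\end{equation}
I would also record that, since $\phi_r$ carries $S\times\{0\}$ biholomorphically onto $S$, it identifies $\pi^{-1}(W_r)$ (the blow-up of $W_r$ along $S$) with $S\times\widehat{B}$, where $\widehat{B}\to B_{F^{\perp}}(r)$ is the blow-up of the ball at the origin with exceptional divisor $E_0\cong\BP(F^{\perp})$; under this identification $E$ corresponds to $\textrm{pr}^{*}E_0$ and $\widetilde{V}$ to the strict transform $\widetilde{V'}$ of $V'$, whence $\widetilde{V}\cdot E=\widetilde{V'}\cdot\textrm{pr}^{*}E_0$.

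Next, since both sides of (\ref{2.9}) are additive over the irreducible components of $V$, I would reduce to $V$ (hence $V'$) irreducible. If $\textrm{pr}\big|_{V'}$ is constant, its value is a single point $u_0\in B_{F^{\perp}}(r)$: the case $u_0=0$ is excluded by hypothesis, and for $u_0\neq 0$ one has $\widetilde{V}\cap E=\emptyset$, so (\ref{2.9}) holds trivially. In the remaining case $\textrm{pr}\big|_{V'}$ is non-constant, and here the key point is that $\textrm{pr}$ is proper because $S$ is compact; hence $\textrm{pr}\big|_{V'}\colon V'\to B_{F^{\perp}}(r)$ is proper, so $V'':=\textrm{pr}(V')$ is an irreducible $1$-dimensional closed analytic subvariety of $B_{F^{\perp}}(r)$, and $\textrm{pr}\big|_{V'}\colon V'\to V''$ is finite, of some degree $d\geq 1$.

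The first inequality in (\ref{2.9}) would then follow by combining three facts. (i) The degree formula: $\int_{V'}\textrm{pr}^{*}(\omega_{F^{\perp}})=d\cdot\textrm{Vol}\,(V'')$. (ii) The intersection identity $\widetilde{V}\cdot E=\widetilde{V'}\cdot\textrm{pr}^{*}E_0=d\,(\widetilde{V''}\cdot E_0)=d\cdot\textrm{mult}_0(V'')$, obtained from the projection formula for intersection numbers together with $\textrm{pr}_{*}[\widetilde{V'}]=d\,[\widetilde{V''}]$ (where $\widetilde{V''}$ is the strict transform of $V''$ in $\widehat{B}$) and the classical fact that the strict transform of a curve germ meets the exceptional divisor of the blow-up of the point in $\textrm{mult}_0$ points (with the convention $\textrm{mult}_0(V'')=0$ when $0\notin V''$). (iii) Federer's volume inequality applied to $V''$ at $0\in B_{F^{\perp}}(r)$: $\textrm{Vol}\,(V'')\geq\pi r^2\cdot\textrm{mult}_0(V'')$. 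Chaining these with (\ref{eqn:pn}) gives $\textrm{Vol}\,(V)\geq\pi r^2\cdot(\widetilde{V}\cdot E)$. For the identity $\pi r^2=\textrm{Vol}\,(\ell\cap W_r)$ in (\ref{2.9}) I would note that $\ell\cap W_r$ corresponds under $\phi_r^{-1}$ to a flat disc of radius $r$, of area $\pi r^2$, and that $\phi_r$ is an isometry; and for the sharpness clause I would take $V$ to be $W_r$ intersected with $s$ copies (counting multiplicity) of $S$-orthogonal lines, for which $\textrm{Vol}\,(V)=s\,\pi r^2$ and $\widetilde{V}\cdot E=s$, since each such line is smooth and meets $S$ along a branch on which $\textrm{pr}$ has vanishing order $1$.

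The main obstacle I anticipate is step (ii), the identity $\widetilde{V}\cdot E=d\cdot\textrm{mult}_0(V'')$: one must verify carefully that the blow-up of $T$ along $S$, restricted over $W_r$ and transported via $\phi_r$, is exactly $S\times\widehat{B}$ with $E$ the pullback of $E_0$ from the ball factor, so that the entire intersection of $\widetilde{V}$ with $E$ is computed inside $\widehat{B}$. The second point requiring attention — though I expect it to be routine — is the properness of $\textrm{pr}\big|_{V'}$: this is where compactness of $S$ is used, and it is precisely what makes $V''=\textrm{pr}(V')$ a genuine closed analytic subvariety of the open ball, the setting in which Federer's inequality is available. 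Everything else (additivity over components, the degree and projection formulas, and the disc computation) is routine.
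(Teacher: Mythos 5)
Your proof is correct and follows essentially the same route as the paper's: split $W_r$ isometrically as $S\times B_{F^{\perp}}(r)$ via Lemma~\ref{Lemma 2.2}, push $V$ down to the ball factor by the (proper, since $S$ is compact) second projection, apply Federer's inequality at the origin, and account for the degree of the projection. The only difference is in bookkeeping: where you establish $\widetilde V\cdot E=d\cdot\mathrm{mult}_0(V'')$ via the projection formula on $S\times\widehat B=\mathrm{Bl}_{S\times\{0\}}(S\times B)$, the paper obtains the same identity ($\delta\cdot\mu=m_1+\cdots+m_\kappa$ in its notation) by composing with a generic linear functional $\psi\colon F^{\perp}\to\C$ and counting sheets of branched coverings --- both are standard, and the step you flag as the main obstacle is indeed routine.
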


\medskip\noindent
{\it Proof.}
Let $V\subset\widetilde{W_r}$ be as above.
It is clear that Proposition \ref{Proposition 2.3} for the general case when $V$ is reducible follows from the special case when $V$ is irreducible, and that (\ref{2.9}) holds trivially for the case when $V\cap S=\emptyset$.  As such, we will assume without loss of generality that
\begin{equation}\label{2.10}
V\textrm{ is irreducible},\quad V\cap
S\neq\emptyset\quad\textrm{and} \quad V\not\subset
S.\end{equation} Then $\widetilde V\cap E$ consists of a finite
number of distinct points $y_1,\cdots,y_\kappa$ with intersection
multiplicities $m_1,\cdots,m_\kappa$ respectively, so that
\begin{equation}\label{2.11}\widetilde V\cdot E=m_1+\cdots+m_\kappa.
\end{equation}
By Lemma \ref{Lemma 2.2}, we have a biholomorphic isometry
\begin{equation}\label{2.12}
(W_r,\omega\big|_{W_r})\cong(S\times F^{\perp}(r), \,\eta_1^*\omega_S+\eta_2^*\omega_{F^{\perp}}).
\end{equation}
Here $\eta_1:S\times F^{\perp}(r)\to S$ and $\eta_2:S\times F^{\perp}(r)\to F^{\perp}(r)$ denote the projections onto the first and second factor respectively.  Next we make an identification $F^{\perp}\cong\mathbb C^{n-k}$ with Euclidean coordinates $z_1,z_2,\cdots,z_{n-k}$ associated to an orthonormal basis of $(F^{\perp}, \langle~,~\rangle\big|_{F^{\perp}})$.  Under this identification, we have
\begin{eqnarray}\label{2.13}
\qquad F^{\perp}(r)&=&\{z=(z_1,z_2,\cdots,z_{n-k})\in\mathbb C^{n-k}\, \big|\, |z|<r\},\quad\textrm{and}\\
\nonumber \omega_{F^{\perp}}&=&\frac{\sqrt{-1}}{2}\sum_{i=1}^{n-k}
dz_i\wedge d\overline{z}_i.
\end{eqnarray}
Here $|z|=\sqrt{\sum_{i=1}^{n-k}|z_i|^2}$. Note that $\eta_2$ (and
thus also $\eta_2\big|_V$) is a proper holomorphic mapping, and
thus by the proper mapping theorem, $V^\prime:=\eta_2(V)$ is a
complex analytic subvariety of $F^{\perp}(r)$.  From (\ref{2.10}),
one easily sees that $V^\prime $ is irreducible and of pure
dimension one, and $\eta_2\big|_V:V\to V^\prime$ is a
$\delta$-sheeted branched covering for some $\delta\in\mathbb N$.
Note that $0\in V^\prime$ since $V\cap S\neq\emptyset$, and we
denote by $\mu$ the multiplicity of $V^\prime$ at the origin $0\in
F^{\perp}(r)$.  Let $[V]$ (resp. $[V^\prime]$) denote the closed
positive current defined by integration over $V$ (resp.
$V^\prime$) in $W_r$ (resp. $F^{\perp}(r)$).  Then via the
identifications in (\ref{2.13}), it follows from Federer's volume
inequality for complex analytic subvarieties in a complex
Euclidean ball (see e.g. [St] or [L2, p. 300]) that one has
\begin{equation}\label{2.14}
\int_{F^{\perp}(r)}[V^\prime]\wedge \omega_{F^{\perp}}\geq\mu
\cdot\pi r^2.
\end{equation}
Next we consider a linear projection map $\psi:F^{\perp}\to\mathbb
C$ from $F^{\perp}$ onto some one-dimensional linear subspace
(which we identify with $\mathbb C$).  It follows readily from the
definition of $\mu$ that for a generic $\psi$,
$\psi\big|_{V^\prime}:V^\prime\to \psi(V^\prime)$ is an
$\mu$-sheeted branched covering.  Furthermore, by considering the
local description of the blow-up map $\pi$ (cf. e.g. [GH, p.
603]), one easily sees that for each $y_j\in\widetilde V\cap E$,
$1\leq j\leq\kappa$, there exists an open neighborhood $U_j$ of
$y_j$ in $\widetilde V$ such that for a generic $\psi$, the
function $\psi\circ \eta_2\circ\pi\big|_{U_j}:U_j\to\mathbb C$ is
a defining function for $E\cap U_j$ in $U_j$, so that $\psi\circ
\eta_2\circ\pi\big|_{\widetilde V\cap U_j}$ is an $m_j$-sheeted
branched covering onto its image, shrinking $U_j$ if necessary.
Thus by considering the degree of the map $\psi\circ
\eta_2\circ\pi\big|_{\widetilde V}$ for a generic $\psi$, one gets
\begin{equation}\label{2.15}
\delta\cdot\mu=m_1+\cdots +m_\kappa.
\end{equation}
Under the identification in (\ref{2.12}), we have
\begin{eqnarray}\label{2.16}
\textrm{Vol}(V)&=&\int_{W_r}[V]\wedge\omega\\
&=&\int_{S\times F^{\perp}(r)} [V]\wedge (\eta_1^*\omega_S+\eta_2^*\omega_{F^{\perp}})\nonumber \\
&\geq & \int_{S\times F^{\perp}(r)} [V]\wedge \eta_2^*\omega_{F^{\perp}}\quad(\textrm{since }\eta_1^*\omega_S\geq 0)\nonumber \\
&=& \delta \int_{ F^{\perp}(r)}  [V^\prime]\wedge \omega_{F^{\perp}}\nonumber \\
&~&\quad(\textrm{upon taking the direct image }\eta_2^*)\nonumber \\
&\geq& \delta\cdot\mu\cdot\pi r^2\quad(\textrm{by (\ref{2.14})})\nonumber \\
&=&\pi r^2\cdot (\widetilde V\cdot
E)\quad(\textrm{by (\ref{2.11}) and (\ref{2.15})}),\nonumber
\end{eqnarray}
which gives the first line of (\ref{2.9}).  Next we take an $S$-orthogonal line
$\ell$ of $W_{\frac{\sqrt{m(T,\,S,\,\omega)}}{2}}$.  Then under the identifications in
(\ref{2.12}), (\ref{2.13}) and upon making a unitary change of $F^{\perp}$ if necessary,
one easily sees that $\ell\cap W_r$ can be given by $\{x\}\times \{(z_1,0,\cdots,0)\in\mathbb C^{n-k}
\big|\,|z_1|<r\}$ for some fixed point $x\in S$, and it follows readily that
\begin{equation}\label{2.17}
\textrm{Vol}(\ell\cap W_r)= \int_{|z_1|<r}\dfrac{\sqrt{-1}}{2}dz_1\wedge d\overline{z}_1=\pi r^2,
\end{equation}
which gives the second line of (\ref{2.9}).  Finally we remark that the last statement of
Proposition \ref{Proposition 2.3} is a direct consequence of (\ref{2.9}), and thus we have finished the proof of Proposition \ref{Proposition 2.3}.
\qed

\bigskip
\section{Seshadri number along the diagonal of $A\times A$}\label{Section 3}

\medskip
In this section, we let $(A=\mathbb C^n/\Lambda,\,L)$ be a polarized abelian variety of dimension $n$, and let the associated objects $\omega$,
$\langle~,~\rangle$, $\Vert~\Vert$ and $m(A,L)$ be as defined in Section \ref{Section 1}.  Next we consider the Cartesian product $A\times A$, and we denote by $p_i:A\times A\to A$ the projection map onto the $i$-th factor.  It is easy to see that $p_1^*L\otimes p_2^*L$ is an ample line bundle over the $2n$-dimensional (product) abelian variety $A\times A$, and the associated translation-invariant
flat K\"ahler form on $A\times A$ is given by $\omega_{A\times A}:=p_1^*\omega +  p_2^*\omega$.  In particular, one has
\begin{equation}\label{3.1}
[\omega_{A\times A}]=c_1(p_1^*L\otimes p_2^*L)\in H^2(A\times A,\,\mathbb Z).
\end{equation}
Furthermore, it is easy to see that the diagonal of $A\times A$ given by
\begin{equation}\label{3.2}
D:=\{(x,y)\in A\times A\,\big|\, x=y\}
\end{equation}
is an $n$-dimensional abelian subvariety of $A\times A$.  Let $m(A\times A, D,\omega_{A\times A})$ be the relative Buser-Sarnak invariant as given in (\ref{2.1}).

\medskip\noindent
\begin{lemma}\label{Lemma 3.1}
We have
\begin{equation}\label{3.3}
m(A\times A, D,\omega_{A\times A})=\dfrac{m(A,L)}{2}.
\end{equation}
\end{lemma}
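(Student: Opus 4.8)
\medskip\noindent\textit{Proof proposal.} The plan is to unwind the definition (\ref{2.1}) of the relative Buser--Sarnak invariant for the pair $(A\times A,\,D)$ by means of an explicit orthogonal decomposition of $\mathbb{C}^{2n}$, and thereby reduce it to $m(A,L)$.

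First I would write $A\times A=\mathbb{C}^{2n}/(\Lambda\times\Lambda)$, so that the diagonal subtorus $D$ is the quotient of the diagonal linear subspace $F:=\{(z,z)\in\mathbb{C}^{2n}\,\big|\,z\in\mathbb{C}^n\}$ by the sublattice $\Lambda_D=(\Lambda\times\Lambda)\cap F=\{(\lambda,\lambda)\,\big|\,\lambda\in\Lambda\}$, which indeed has rank $2n$ as required. The inner product on $\mathbb{C}^{2n}$ associated to $\omega_{A\times A}=p_1^*\omega+p_2^*\omega$ is $\langle(z_1,z_2),(w_1,w_2)\rangle_{A\times A}=\langle z_1,w_1\rangle+\langle z_2,w_2\rangle$. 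A direct check then shows that the orthogonal complement of $F$ is the anti-diagonal $F^\perp=\{(z,-z)\,\big|\,z\in\mathbb{C}^n\}$, and that, using the decomposition
\[
(a,b)=\Bigl(\tfrac{a+b}{2},\tfrac{a+b}{2}\Bigr)+\Bigl(\tfrac{a-b}{2},\tfrac{b-a}{2}\Bigr)\in F\oplus F^\perp ,
\]
the orthogonal projection onto $F^\perp$ is $q_{F^\perp}(a,b)=\bigl(\tfrac{a-b}{2},\tfrac{b-a}{2}\bigr)$, whence $\Vert q_{F^\perp}(a,b)\Vert_{A\times A}^2=\tfrac12\Vert a-b\Vert^2$ for all $(a,b)\in\mathbb{C}^{2n}$.

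Next I would substitute $(\lambda_1,\lambda_2)\in(\Lambda\times\Lambda)\setminus\Lambda_D$ into (\ref{2.1}). Since $(\lambda_1,\lambda_2)\notin\Lambda_D$ is equivalent to $\lambda_1-\lambda_2\neq 0$, and since the assignment $(\lambda_1,\lambda_2)\mapsto\lambda_1-\lambda_2$ carries the set of such pairs onto all of $\Lambda\setminus\{0\}$ (the difference always lies in $\Lambda$, and taking $\lambda_2=0$ realizes every nonzero element), one obtains
\[
m(A\times A,\,D,\,\omega_{A\times A})=\min_{(\lambda_1,\lambda_2)\in(\Lambda\times\Lambda)\setminus\Lambda_D}\tfrac12\Vert\lambda_1-\lambda_2\Vert^2=\tfrac12\min_{\mu\in\Lambda\setminus\{0\}}\Vert\mu\Vert^2=\tfrac12\,m(A,L),
\]
which is precisely (\ref{3.3}).

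Every step above is elementary, so I do not expect a genuine obstacle; the one point deserving care is the identification of $F^\perp$ with the anti-diagonal, since it is exactly the relation $\Vert(z,-z)\Vert_{A\times A}^2=2\Vert z\Vert^2$ applied with $a-b=2z$ that produces the factor ``$/2$'' in the statement.
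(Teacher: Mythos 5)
Your proposal is correct and follows essentially the same route as the paper's own proof: both identify $F^\perp$ via the projection formula $q_{F^\perp}(\lambda_1,\lambda_2)=\bigl(\tfrac{\lambda_1-\lambda_2}{2},\tfrac{\lambda_2-\lambda_1}{2}\bigr)$, deduce $\Vert q_{F^\perp}(\lambda_1,\lambda_2)\Vert^2=\tfrac12\Vert\lambda_1-\lambda_2\Vert^2$, and use that $(\lambda_1,\lambda_2)\mapsto\lambda_1-\lambda_2$ maps $(\Lambda\times\Lambda)\setminus\Lambda_D$ onto $\Lambda\setminus\{0\}$. Your explicit identification of $F^\perp$ with the anti-diagonal is a minor elaboration the paper leaves implicit, but the argument is the same.
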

\medskip\noindent
{\it Proof.}
First we write $A\times A=(\mathbb C^n\times\mathbb C^n)/(\Lambda\times\Lambda)$, and we denote by
$\langle~,~\rangle_{\mathbb C^n\times\mathbb C^n}$ and $\Vert~\Vert_{\mathbb C^n\times\mathbb C^n}$ the inner product and norm on $\mathbb C^n\times\mathbb C^n$ associated to $\omega_{A\times A}$.  It is easy to see that as a compact complex subtorus of $A\times A$, $D$ is isomorphic to the quotient $F/\Lambda_D$, where $F:=\{(z,z)\,\big|\, z\in\mathbb C^n\}\subset\mathbb C^n\times\mathbb C^n$ and $\Lambda_D:=\{(\lambda,\lambda)\,\big|\,\lambda\in\Lambda\}\subset \Lambda \times \Lambda$.
Denote by $F^{\perp}$ the orthogonal complement of $F$ in $\mathbb C^n\times\mathbb C^n$ with respect to $\langle~,~\rangle_{\mathbb C^n\times\mathbb C^n}$, and let $q_{F^{\perp}}:\mathbb C^n\times\mathbb C^n\to F^{\perp}$ be the corresponding unitary projection map.  Then for any $(\lambda_1,\lambda_2)\in
\Lambda \times \Lambda$, one easily checks that
$q_{F^{\perp}}(\lambda_1,\lambda_2)=(\frac{\lambda_1-\lambda_2}{2} ,\frac{\lambda_2-\lambda_1}{2} )$, and thus
\begin{equation}\label{3.4}
\Vert q_{F^{\perp}}(\lambda_1,\lambda_2)\Vert^2_{\mathbb C^n\times\mathbb C^n}=\Vert\frac{\lambda_1-\lambda_2}{2}\Vert^2+\Vert\frac{\lambda_2-\lambda_1}{2}\Vert^2=
\dfrac{\Vert\lambda_1-\lambda_2\Vert^2}{2}.
\end{equation}
Together with the obvious equality $\{\lambda_1-\lambda_2\big|\,(\lambda_1,\lambda_2)\in(\Lambda \times \Lambda)\setminus \Lambda_D\}=\Lambda\setminus\{0\}$ (and upon writing $\lambda=\lambda_1-\lambda_2$), one gets
\begin{equation}\label{3.5}
\inf_{ (\lambda_1,\lambda_2)\in (\Lambda \times \Lambda)\setminus \Lambda_D}   \Vert q_{F^{\perp}}(\lambda_1,\lambda_2)\Vert^2_{\mathbb C^n\times\mathbb C^n}=\dfrac{1}{2} \inf_{\lambda \in  \Lambda\setminus\{0\}}\Vert\lambda\Vert^2,
\end{equation}
which, upon recalling (\ref{1.2}) and (\ref{2.1}), gives (\ref{3.3}) immediately.
\qed

\medskip
Next we let $\pi:\widetilde{A\times A}\to A\times A$ be the blow-up of $A\times A$ along $D$ with the associated exceptional divisor given by $E:=\pi^{-1}(D)$.  We consider the line bundle $p_1^*L\otimes p_2^*L$ over $A\times A$, and denote its pull-back to $\widetilde{A\times A}$ by
\begin{equation}\label{3.6}
\sL:=\pi^*(p_1^*L\otimes p_2^*L).
\end{equation}
Then the
Seshadri number $\epsilon(p_1^*L\otimes p_2^*L, D)$ of $p_1^*L\otimes p_2^*L$ along $D$ is defined by
\begin{equation}\label{3.7}
\epsilon(p_1^*L\otimes p_2^*L, D):=\sup\{\,\epsilon\in\mathbb
R\,\big|\,\sL-\epsilon E \textrm{ is nef on }\widetilde{A\times
A}\} \end{equation}
(see e.g. [L2, Remark 5.4.3] for the
general definition and \cite{D} for its origin).  Here as usual, an $\mathbb R$-divisor $\Gamma$
on an algebraic manifold $M$ is said to be nef if
$\Gamma\cdot C\geq 0$ for any algebraic curve $C\subset M$.  Our main result in this section is the following

\begin{proposition}\label{Proposition 3.2}
Let $(A,L)$ be a polarized abelian variety of dimension $n$, and let
$\sL$ be as in (\ref{3.6}).  Then $\sL-\alpha E$ is nef on $\widetilde{A\times A}$ for all $0\leq\alpha\leq \frac{\pi}{8}\cdot m(A,L)$.  In particular, we have
\begin{equation}
\label{3.8} \epsilon(p_1^*L \otimes p_2^*L , D)\geq
\dfrac{\pi}{8}\cdot m(A,L).
\end{equation}
\end{proposition}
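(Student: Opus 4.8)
The plan is to show that $\sL - \alpha E$ has non-negative intersection with every irreducible curve $C \subset \widetilde{A\times A}$ for $\alpha = \frac{\pi}{8}\cdot m(A,L)$; since nefness is a closed condition in $\alpha$, this gives nefness for all $0\le \alpha \le \frac{\pi}{8}\cdot m(A,L)$, and then (\ref{3.8}) follows from the definition (\ref{3.7}). First I would dispose of the easy cases: if $C \subset E$, then $\sL\cdot C = 0$ (as $\sL = \pi^*(\cdots)$ is trivial on fibers of $\pi$) and $-E\cdot C \ge 0$ because $-E$ restricted to the exceptional divisor of a blow-up of a smooth center is relatively ample, so $(\sL - \alpha E)\cdot C \ge 0$. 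Likewise if $\pi(C) \subset D$ but $C \not\subset E$: then... actually $C \not\subset E$ forces $\pi(C)$ to be a point or a curve not in $D$, so the remaining case is $C = \widetilde{C_0}$, the strict transform of an irreducible curve $C_0 \subset A\times A$ with $C_0 \not\subset D$. Writing $s := \widetilde{C_0}\cdot E \ge 0$ (the multiplicity of $C_0$ along $D$), we have $(\sL - \alpha E)\cdot \widetilde{C_0} = (p_1^*L\otimes p_2^*L)\cdot C_0 - \alpha s = \mathrm{Vol}(C_0) - \alpha s$, where the volume is computed with respect to $\omega_{A\times A}$, using (\ref{3.1}) and Wirtinger's theorem.

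So the crux is the volume estimate $\mathrm{Vol}(C_0) \ge \alpha s = \frac{\pi}{8}\, m(A,L)\cdot s$. Here is where the two preceding results enter. Set $r := \frac{1}{2}\sqrt{m(A\times A, D, \omega_{A\times A})}$; by Lemma~\ref{Lemma 3.1} this equals $\frac{1}{2}\sqrt{m(A,L)/2} = \frac{1}{2\sqrt 2}\sqrt{m(A,L)}$, so $\pi r^2 = \frac{\pi}{8}\, m(A,L) = \alpha$. Now $V := C_0 \cap W_r$ is a purely $1$-dimensional analytic subvariety of the geodesic tubular neighborhood $W_r$ of $D$ in $A\times A$, with no component in $D$ (since $C_0 \not\subset D$ and $C_0$ is irreducible). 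Proposition~\ref{Proposition 2.3}, applied with $T = A\times A$ and $S = D$, gives $\mathrm{Vol}(V) \ge \pi r^2 \cdot(\widetilde V \cdot E)$. Since $\mathrm{Vol}(C_0) \ge \mathrm{Vol}(V)$ (restricting to an open subset only decreases volume, as $\omega_{A\times A}$ is a positive form), it remains to check that $\widetilde V \cdot E = \widetilde{C_0}\cdot E = s$: the strict transforms agree near $E$ because $V$ and $C_0$ coincide in the neighborhood $W_r \supset D$ of the blow-up center, so their intersection multiplicities with $E$ at each point of $D$ are the same. Combining, $\mathrm{Vol}(C_0) \ge \pi r^2 \cdot s = \frac{\pi}{8}\, m(A,L)\cdot s = \alpha s$, which is exactly what we need.

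The main obstacle, I expect, is the bookkeeping around the case analysis for curves $C$ and the identification $\widetilde V\cdot E = \widetilde{C_0}\cdot E$ — specifically being careful that a curve $C_0\subset A\times A$ meeting $D$ but not contained in it really does restrict to a legitimate $V$ in $W_r$ to which Proposition~\ref{Proposition 2.3} applies, and that no component of $V$ gets swallowed into $D$. A secondary point to handle cleanly is why it suffices to test nefness against strict transforms of curves from $A\times A$ together with curves inside $E$: every irreducible curve in $\widetilde{A\times A}$ either lies in $E$ or is the strict transform of its image $\pi(C) = C_0$, and if $C_0$ is a point then $C\subset E$, so the dichotomy is complete. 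Once these are pinned down, the estimate is a direct substitution, with the numerology $\pi r^2 = \frac{\pi}{8} m(A,L)$ falling out of Lemma~\ref{Lemma 3.1}.
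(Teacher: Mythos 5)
Your proposal follows essentially the same route as the paper: reduce to irreducible curves, split according to whether $C\subset E$, identify $\sL\cdot\widetilde{C_0}$ with $\mathrm{Vol}(C_0)$ via Wirtinger, restrict to the tube $W_r$ with $\pi r^2=\frac{\pi}{8}m(A,L)$ supplied by Lemma~\ref{Lemma 3.1}, and invoke Proposition~\ref{Proposition 2.3}. The main estimate, the identification $\widetilde V\cdot E=\widetilde{C_0}\cdot E$, and the numerology are all correct and match the paper's argument.

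However, your treatment of the case $C\subset E$ has a genuine gap. You assert $\sL\cdot C=0$ and deduce $-E\cdot C\ge 0$ from the relative ampleness of $\sO(-E)\big|_E$ over the center. Both steps fail for curves $C\subset E$ that dominate $D$ rather than lying in a fiber of $\pi\big|_E$: for such $C$ one has $\sL\cdot C=(p_1^*L\otimes p_2^*L)\cdot\pi_*C>0$, and relative ampleness of $\sO(-E)\big|_E$ controls only its degree on curves contracted by $\pi$ --- for a general blow-up along a smooth center $Z$ whose normal bundle admits a sub-line-bundle of positive degree, the exceptional divisor contains sections $C$ with $E\cdot C>0$, so the implication you invoke is false in that generality. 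What rescues the argument here (and what the paper uses explicitly) is that $D$ is a subtorus of the abelian variety $A\times A$, so $N_{D|A\times A}$ is holomorphically trivial; hence $E\cong D\times\mathbb P^{n-1}$ and $\sO(E)\big|_E\cong\sigma^*\sO_{\mathbb P^{n-1}}(-1)$, which gives $E\cdot C\le 0$ for \emph{every} irreducible curve $C\subset E$, and then $(\sL-\alpha E)\cdot C\ge 0$ follows from the nefness of $\sL$ (not from the incorrect claim $\sL\cdot C=0$). A lesser quibble: to pass from the endpoint $\alpha_0=\frac{\pi}{8}m(A,L)$ to all $0\le\alpha\le\alpha_0$ you should appeal to convexity of the nef cone (each $\sL-\alpha E$ is a convex combination of the nef classes $\sL$ and $\sL-\alpha_0E$), not to closedness, which would run in the wrong direction.
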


\medskip\noindent
{\it Proof.}  First it is easy to see from (\ref{3.6}) that $\sL$ is nef, and thus the proposition holds for the case when $\alpha=0$.  Now we fix a number $\alpha$ satisfying $0<\alpha\leq \frac{\pi}{8}\cdot m(A,L)$.  Then it is easy to see from Lemma \ref{Lemma 3.1} that $\alpha=\pi r^2$ for some $r$ satisfying $0<r\leq\frac{\sqrt{m(A\times A, D,\omega_{A\times A})}}{2}$.  For each such $r$, we let $W_r$ be the geodesic tubular neighborhood of $D$ in $A\times A$ of radius $r$ as defined in (\ref{2.3})
(with $T$ and $S$ there given by $A\times A$ and $D$ respectively).  Let $C$ be an algebraic curve in
$\widetilde{A\times A}$.  First we consider the case when $C$ is irreducible and $C\not\subset E$, so that
$\pi(C)\not\subset D$ and $C$ coincides with the strict transform of $\pi(C)$ with respect to the blow-up map $\pi$ (i.e., $C=\widetilde{\pi(C)}$ in terms of the notations in Section \ref{Section 2}).  Then by (\ref{3.1}), (\ref{3.6}) and upon taking the direct image $\pi_*$, we get
\begin{eqnarray}\label{3.9}
\sL\cdot C &=&\int_{\widetilde{A\times A}}[C]\wedge\pi^*\omega_{A\times A}\\
&=&\int_{A\times A}[\pi(C)]\wedge\omega_{A\times A}\nonumber\\
&\geq&\int_{W_r}[\pi(C)]\wedge\omega_{A\times A}\nonumber\\
&\geq&\pi r^2\cdot (E\cdot C)\quad(\textrm{by Proposition \ref{Proposition 2.3}}),\nonumber\\
&=& \alpha\cdot (E\cdot C).\nonumber
\end{eqnarray}
In other words, we have
\begin{equation}
\label{3.10} (\sL-\alpha E)\cdot C\geq 0.
\end{equation}
Next we consider the case when $C$ is irreducible and $C\subset E$.  By considering translation-invariant vector fields on $D$ and $A\times A$, one easily sees that
the normal bundle $N_{D|(A\times A)}$ is
holomorphically trivial over $D$.  It follows readily that the
line bundle $[E]\big|_E$ is isomorphic to $\sigma^*\mathcal
O_{\mathbb P^{n-1}}(-1)$, where $\sigma:D\times \mathbb P^{n-1}\to
\mathbb P^{n-1}$ denotes the projection onto the second factor.  Hence  $E\cdot C\leq 0$ for any irreducible curve
$C\subset E$. Together with the nefness of $\sL$, it follows readily that (\ref{3.10}) also holds for the irreducible case when $C\subset E$.  Finally one easily sees that (\ref{3.10}) for the case when $C$ is reducible follows readily from the case when $C$ is irreducible.  Thus we have finished the proof of the nefness of $\sL-\alpha D$ for all $0\leq\alpha\leq \frac{\pi}{8}\cdot m(A,L)$, which also leads to
(\ref{3.8}) readily.
\qed

\bigskip
\section{Projective normality}\label{Section 4}

\medskip
In this section, we are going to give the proof of  Theorem
\ref{Theorem 1.3}, and we follow the notation in Section
\ref{Section 3}.  First we have

\begin{proposition}\label{Proposition 4.1}
Let $(A,L)$, $n$, $E$ and $\sL$ be as in Proposition \ref{Proposition 3.2}.
If $\sL
\otimes \sO(- n E)$ is nef and big, then $L$ is projectively
normal.
\end{proposition}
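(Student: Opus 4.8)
The plan is to reduce the projective normality of $L$ to a vanishing/surjectivity statement on $A\times A$ and then feed in the nef-and-bigness hypothesis on $\sL\otimes\sO(-nE)$. The starting point is the classical reformulation (going back to work of Koizumi, Mumford, and in the form relevant here to the multiplication-map literature cited as \cite{Iy}): $\rho_r$ is surjective for all $r\geq 1$ if and only if $\rho_2:Sym^2H^0(A,L)\to H^0(A,L^{\otimes 2})$ is surjective together with surjectivity of $H^0(A,L^{\otimes r})\otimes H^0(A,L)\to H^0(A,L^{\otimes(r+1)})$ for $r\geq 2$; in fact for abelian varieties a theorem of Mumford (see also the references in \cite{Iy}) shows that projective normality of $L$ follows once the map $H^0(A,L)\otimes H^0(A,L)\to H^0(A\times A,p_1^*L\otimes p_2^*L)$ is surjective \emph{and} the latter space, restricted along the diagonal, surjects onto $H^0(D,L^{\otimes 2}|_D)$, which via the multiplication isomorphism $H^0(A\times A,p_1^*L\otimes p_2^*L)=H^0(A,L)\otimes H^0(A,L)$ is exactly the content one wants. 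Concretely I would invoke the criterion: $L$ is projectively normal provided the restriction map
\begin{equation*}
H^0(A\times A,\, p_1^*L\otimes p_2^*L)\ \longrightarrow\ H^0(D,\, (p_1^*L\otimes p_2^*L)|_D)
\end{equation*}
is surjective, where $D\subset A\times A$ is the diagonal. (Under the isomorphism $A\times A\cong A\times A$, $(x,y)\mapsto(x+y,x-y)$ up to isogeny, this is the standard translation of the multiplication map into a restriction map; this is precisely the device used by Iyer and by Khaled.)

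Next I would lift everything to the blow-up $\pi:\widetilde{A\times A}\to A\times A$ along $D$ with exceptional divisor $E$. On $\widetilde{A\times A}$ one has the twisted ideal-sheaf sequence
\begin{equation*}
0\ \longrightarrow\ \sL\otimes\sO(-E)\ \longrightarrow\ \sL\ \longrightarrow\ \sL|_{E}\ \longrightarrow\ 0,
\end{equation*}
and more generally, for the $j$-th infinitesimal neighborhood, sequences relating $\sL\otimes\sO(-(j+1)E)$, $\sL\otimes\sO(-jE)$ and $\sL\otimes\sO(-jE)|_E$. Since $\pi_*\sL=p_1^*L\otimes p_2^*L$ and $R^i\pi_*\sL=0$ for $i>0$, surjectivity of the diagonal restriction map above is implied by $H^1(\widetilde{A\times A},\ \sL\otimes\sO(-E))=0$. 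One wants a bit more (restriction to a higher infinitesimal neighborhood of $D$ may be what the multiplication criterion really needs, matching the "$-nE$" twist), so the key cohomological input I would aim to establish is
\begin{equation*}
H^i\bigl(\widetilde{A\times A},\ \sL\otimes\sO(-mE)\bigr)=0\quad\text{for all }i\geq 1,\ 1\leq m\leq n,
\end{equation*}
which — combined with the exceptional-divisor sequences and Kodaira-type vanishing — forces the chain of restriction maps $H^0(\sL\otimes\sO(-(m-1)E))\twoheadrightarrow H^0(\sL\otimes\sO(-(m-1)E)|_E)$ to be surjective and ultimately yields surjectivity of the diagonal restriction. The reason the range goes up to $m=n$ is that $E\cong D\times\mathbb P^{n-1}$ and controlling $H^0$ of the graded pieces $\sO_{\mathbb P^{n-1}}(j)$ for $0\le j\le n-1$ is exactly what governs whether the full Taylor expansion along $D$ is captured.

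To get that vanishing I would use Kawamata–Viehweg vanishing: write $\sL\otimes\sO(-mE)=K_{\widetilde{A\times A}}+\bigl(\sL\otimes\sO(-mE)-K_{\widetilde{A\times A}}\bigr)$ and compute $K_{\widetilde{A\times A}}=\pi^*K_{A\times A}+(n-1)E=(n-1)E$ since $A\times A$ is an abelian variety. Hence $\sL\otimes\sO(-mE)-K_{\widetilde{A\times A}}=\sL\otimes\sO(-(m+n-1)E)$, and one needs this $\mathbb R$-divisor to be nef and big for $1\le m\le n$; the extreme (hardest) case is $m=n$, i.e. one needs $\sL\otimes\sO(-(2n-1)E)$ nef and big, or — after the standard trick of replacing $\sL$ by a small rational perturbation and using that $\sL\otimes\sO(-nE)$ is nef and big by hypothesis together with $\sL$ itself being nef and big — it suffices to interpolate: $\sL\otimes\sO(-(m+n-1)E)=\tfrac{m+n-1}{n}\bigl(\sL\otimes\sO(-nE)\bigr)+\bigl(1-\tfrac{m+n-1}{n}\bigr)\sL$ when $m+n-1\le n$, and for $m+n-1>n$ one instead writes it as a convex combination that still lands inside the nef cone using the hypothesis at the boundary. \textbf{The main obstacle} I anticipate is exactly this bookkeeping: the naive Kawamata–Viehweg application demands nef-and-bigness of $\sL\otimes\sO(-(2n-1)E)$, which is \emph{stronger} than the hypothesis $\sL\otimes\sO(-nE)$ nef and big, so one cannot simply invoke the hypothesis directly. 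The resolution should be to not vanish all of $H^i$ at once but to run the infinitesimal-neighborhood sequences step by step, at each stage only needing $H^1$ of $\sL\otimes\sO(-mE)$ restricted to $E$ (a projective bundle over an abelian variety, where vanishing is elementary via the Leray spectral sequence and vanishing on abelian varieties) rather than on all of $\widetilde{A\times A}$, so that the only global cohomology input is $H^1(\sL\otimes\sO(-E))=0$, which follows from Kawamata–Viehweg applied to $\sL\otimes\sO(-E)-K_{\widetilde{A\times A}}=\sL\otimes\sO(-nE)$ — precisely the hypothesis. Making this filtration argument precise, and checking that the resulting $H^0$-surjections assemble into surjectivity of the diagonal restriction map (equivalently, of all the $\rho_r$), is the technical heart of the proof.
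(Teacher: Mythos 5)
Your final paragraph contains the paper's actual argument, but you have buried it inside a detour that reflects a misunderstanding of where the twist $-nE$ comes from. The perceived need to control $H^i(\sL\otimes\sO(-mE))$ for all $1\le m\le n$, and the ensuing filtration through infinitesimal neighborhoods of $D$ that you leave as the unfinished ``technical heart,'' are not part of the proof at all. The reduction of projective normality to the surjectivity of the single map $\rho_2$ is exactly the content of [Iy, Proposition 2.1], which the paper simply cites; no higher multiplication maps and no higher-order Taylor expansion along $D$ enter. Surjectivity of $\rho_2$ is then controlled by the vanishing of $H^1(A\times A,\,p_1^*L\otimes p_2^*L\otimes\sI)$ with $\sI$ the (reduced, first-power) ideal sheaf of $D$, via the K\"unneth identification $H^0(A\times A,p_1^*L\otimes p_2^*L)\cong H^0(A,L)\otimes H^0(A,L)$ and the isomorphism $(p_1^*L\otimes p_2^*L)|_D\cong L^{\otimes 2}$; this $H^1$ equals $H^1(\widetilde{A\times A},\,\sL\otimes\sO(-E))$, so only your case $m=1$ is ever needed. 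The exponent $n$ in the hypothesis arises purely from the canonical bundle formula $K_{\widetilde{A\times A}}=\sO((n-1)E)$: one rewrites $\sL\otimes\sO(-E)=K_{\widetilde{A\times A}}\otimes(\sL\otimes\sO(-nE))$ and applies Kawamata--Viehweg to the nef and big line bundle $\sL\otimes\sO(-nE)$. You do state this correct resolution at the very end, so the essential content is there; but the discussion of $m\ge 2$, the convex-combination bookkeeping in the nef cone, and the claim that a step-by-step filtration argument remains to be carried out should all be deleted --- once [Iy, Proposition 2.1] is invoked, nothing remains to assemble.
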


\begin{proof} By [Iy, Proposition 2.1], one knows that
the surjectivity of the multiplication maps $\rho_r$ in
(\ref{1.1}) for all $r\geq 1$ will follow from the surjectivity of $\rho_2$ (i.e., the case when $r=2$). Thus to prove that $L$ is projectively
normal, it suffices to show that the multiplication map
\begin{equation}
\label{4.1}\rho: H^0(A, L) \otimes H^0(A, L) \longrightarrow H^0(A,
L^{\otimes 2})
\end{equation}
(as given in (\ref{1.1})) is surjective. We are going to reduce
this to the question of vanishing of a certain cohomology group on
$\widetilde{A\times A}$ following the standard approach in [BEL,
Section 3].  Here $\pi: \widetilde{A\times A} \to A \times A$ is
the blow-up of $A \times A$ along the diagonal $D$ as in Section
\ref{Section 3}. Consider the short exact sequence on $A\times A$
given by
\begin{equation}
\label{4.2}0 \longrightarrow p_1^*L \otimes p_2^*L
\otimes \sI \longrightarrow p_1^*L \otimes p_2^*L \longrightarrow
p_1^*L \otimes p_2^*L\big|_{D} \longrightarrow 0,
\end{equation}
where
$\sI$ denotes the ideal sheaf of $D$.  Note that $p_1^*L \otimes p_2^*L\big|_D\cong L^{\otimes 2}$ under the natural isomorphism $D\cong A$, and one has $H^0(A\times A,p_1^*L \otimes p_2^*L)\cong H^0(A, L) \otimes H^0(A, L)$ by the K\"unneth formula.  Together with the long exact sequence associated to (\ref{4.2}), one easily sees that $\rho$ is surjective if $H^1(A \times A, p_1^*L \otimes p_2^*L \otimes \sI)
=0$.  But one also easily checks that
\begin{eqnarray}\quad
H^1(A \times A, p_1^*L \otimes p_2^*L \otimes \sI)&=& H^1(\widetilde{A \times A}, \sL\otimes \sO(- E))  \nonumber\\
\label{4.3}\qquad &=& H^1(\widetilde{A \times A}, K_{\widetilde{A \times A} }\otimes \sL\otimes \sO(- nE)),
\end{eqnarray}
where the last line follows from the isomorphism $K_{\widetilde{A \times A} }=\pi^*K_{A\times A}+\sO( (n-1)E)=\sO((n-1)E)$.  Finally if $\sL \otimes
\sO(-nE)$ is nef and big, then it follows from Kawamata-Viehweg vanishing theorem that
$H^1(\widetilde{A \times A}, K_{\widetilde{A \times A} }\otimes \sL\otimes \sO(- nE))=0$, which together with (\ref{4.3}), imply that $\rho$ is surjective.
\end{proof}

\medskip\noindent
\begin{lemma}\label{Lemma 4.2} Let $(A,L)$, $n$, $E$ and $\sL$ be as in Proposition \ref{Proposition 3.2}.
If $\sL
\otimes \sO(- n E)$ is nef and $L^n > (2n)^n$, then $\sL
\otimes \sO(- n E)$ is big. \end{lemma}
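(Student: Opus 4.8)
The plan is to reduce bigness to the positivity of a single intersection number. Since $n\in\mathbb{Z}$, the sheaf $\sL\otimes\sO(-nE)$ is an honest line bundle, it is nef by hypothesis, and $\widetilde{A\times A}$ is a smooth projective variety of dimension $2n$; hence, by the standard fact that a nef line bundle on a projective variety of dimension $N$ is big if and only if its top self-intersection number is positive, it suffices to prove
\[
(\sL-nE)^{2n}>0 .
\]

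To evaluate the left-hand side, write $M:=p_1^*L\otimes p_2^*L$, so that $\sL=\pi^*M$, and expand
\[
(\sL-nE)^{2n}=\sum_{j=0}^{2n}\binom{2n}{j}(-n)^{j}\,(\pi^*M)^{2n-j}\cdot E^{j}.
\]
For $j=0$, the projection formula gives $(\pi^*M)^{2n}=M^{2n}$ on $A\times A$; expanding $M=p_1^*L+p_2^*L$ and using $c_1(L)^{n+1}=0$ on $A$, only the middle term survives and yields $\binom{2n}{n}(L^n)^2$. For $j\geq 1$, I would restrict to the exceptional divisor, using the structural facts already recorded in the proof of Proposition \ref{Proposition 3.2}: the normal bundle of $D$ in $A\times A$ is trivial, so $E\cong D\times\mathbb{P}^{n-1}$ with $\pi|_E$ the first projection, and $\sO(E)|_E\cong\sigma^*\sO_{\mathbb{P}^{n-1}}(-1)$. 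Since $M|_D\cong L^{\otimes2}$ under $D\cong A$, one has $\pi^*M|_E=(\pi|_E)^*(2\,c_1(L))$, and hence
\begin{align*}
(\pi^*M)^{2n-j}\cdot E^{j}
&=\int_{E}(\pi^*M|_E)^{2n-j}(E|_E)^{j-1}\\
&=(-1)^{j-1}\int_{D\times\mathbb{P}^{n-1}}(2\,c_1(L))^{2n-j}\,h^{j-1},
\end{align*}
where $h=\sigma^*c_1(\sO_{\mathbb{P}^{n-1}}(1))$. Comparing degrees on the two factors ($c_1(L)^{m}=0$ on $D$ for $m>n$, and $h^{m}=0$ on $\mathbb{P}^{n-1}$ for $m\geq n$) forces $j=n$, in which case the integral is $2^{n}L^{n}$, so the term equals $(-1)^{n-1}2^{n}L^{n}$.

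Substituting the two surviving contributions ($j=0$ and $j=n$) gives
\begin{align*}
(\sL-nE)^{2n}
&=\binom{2n}{n}(L^n)^2+\binom{2n}{n}(-n)^{n}(-1)^{n-1}2^{n}L^{n}\\
&=\binom{2n}{n}\,L^{n}\bigl(L^{n}-(2n)^{n}\bigr),
\end{align*}
which is positive because $L$ is ample (so $L^n>0$) and $L^n>(2n)^n$ by assumption; hence $\sL\otimes\sO(-nE)$ is big. The argument is essentially bookkeeping; the only delicate points are keeping track of the sign of $\sO(E)|_E$ together with the factor $(-1)^{j-1}$, and checking that all mixed terms except $j=n$ vanish — this last point is where a slip would be easiest, but no deeper obstacle is anticipated.
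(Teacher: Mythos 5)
Your proposal is correct and follows exactly the paper's route: reduce bigness of the nef line bundle to positivity of $(\sL-nE)^{2n}$, then compute that number using $E\cong D\times\mathbb{P}^{n-1}$, $\sO(E)|_E\cong\sigma^*\sO_{\mathbb{P}^{n-1}}(-1)$ and $\sL|_E\cong\eta^*(L\otimes L)$, arriving at the same formula $\binom{2n}{n}L^n(L^n-(2n)^n)$. The only difference is that you write out in full the ``straight-forward calculation'' that the paper leaves implicit, and your bookkeeping (the vanishing of all mixed terms except $j=n$, and the sign $(-1)^{j-1}$ from $(E|_E)^{j-1}$) checks out.
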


\begin{proof}  Note that $$ \sL^{2n} = (p_1^*L \otimes p_2^*L)^{2n}= \frac{(2n)!}{n! \cdot n!} L^n \cdot
L^n.$$ Recall that we have the identification $E = D \times
\mathbb P^{n-1}$ from the proof of Proposition 3.2. Denoting by
$\sigma: E \to \mathbb P^{n-1}$ and $\eta: E \to D = A$ the
projections, we have $\sO(E)|_E = \sigma^*\sO_{\mathbb
P^{n-1}}(-1)$ and $\sL|_E = \eta^*(L\otimes L)$.  From these,  a
straight-forward calculation gives
\begin{equation}
\label{4.4}  (\sL \otimes \sO(-nE))^{2n} = \frac{(2n)!}{n! \cdot n!} \cdot
L^n \cdot (L^n - (2n)^n).
\end{equation}
Together with the well-known fact that a nef line bundle is big if
and only if its top self-intersection number is positive, one
obtains the lemma readily
\end{proof}

\medskip
Finally we complete the proof of our main result as follows:

\medskip\noindent
{\it Proof of Theorem \ref{Theorem 1.3}}.  Let $\sA_{(d_1,\cdots,d_n)}$ denote the moduli space of $n$-dimensional polarized abelian varieties $(A,L)$ of a given polarization type $d_1|d_2|\cdots|d_n$ and satisfying
\begin{equation}
\label{4.5}  d_1\cdots d_n\geq \dfrac{8^n}{2}
\cdot \dfrac{n^n}{n!},
\end{equation}
and recall that
\begin{equation}
\label{4.6} h^0(A, L)= d_1\cdots d_n=\dfrac{L^n}{n!}\quad\textrm{for all }(A,L)\in \sA_{(d_1,\cdots,d_n)}.
\end{equation}
By [Ba, Theorem 1], there exists some $(A_o,L_o)\in \sA_{(d_1,\cdots,d_n)}$ such that
\begin{equation}
\label{4.7} m(A_o,L_o)= \dfrac{1}{\pi}\root n \of {2L_o^n}.
\end{equation}
Let $\sL_o$ be the line bundle over the blow-up $\widetilde{A_o \times A_o}$ of $A_o \times A_o$ along the diagonal (with exceptional divisor $E_o$) as in Proposition \ref{Proposition 3.2}.  From (\ref{4.5}), (\ref{4.6}) and (\ref{4.7}), one easily checks that $n\leq \frac{\pi}{8}\cdot m(A_o,L_o)$. Thus it follows from Proposition \ref{Proposition 3.2} that $\sL_o \otimes \sO(-nE_o)$ is nef.  One also easily checks from (\ref{4.5}) and (\ref{4.6}) that $L_o^n>(2n)^n$, and thus by Lemma \ref{Lemma 4.2}, the nef line bundle $\sL_o \otimes \sO(-nE_o)$ is also big.  Then it follows from Proposition \ref{Proposition 4.1} that $(A_o,L_o)$ is projectively normal.  Finally it is easy to see that the existence of a projective normal $(A_o,L_o)$ implies readily that a general $(A,L)$ in
$\sA_{(d_1,\cdots,d_n)}$ is projectively normal.
\qed

\bigskip

\enddocument
\begin{thebibliography}{~}

\bibitem[Ba]{Ba} Bauer, T.:
Seshadri constants and periods of polarized abelian varieties.
Math. Ann. {\bf 312} (1998), 607-623.

\bibitem[BEL]{BEL} Bertram, A., Ein, L. and Lazarsfeld, R.:
Vanishing theorems, a theorem of Severi, and the equations
defining projective varieties. Journ. A. M. S.
 {\bf 4} no. 3 (1991), 587-602.

\bibitem[BS]{BS}
P. Buser and P. Sarnak, On the period matrix of a Riemann surface of
large genus. With an appendix by J. H. Conway and N. J. A. Sloane.
Invent. Math. {\bf 117} (1994), 27-56.

\bibitem[D]{D}
J.-P. Demailly, Singular Hermitian metrics on positive line bundles.
Lecture Notes in Math. {\bf 1507} (1992), 84-104.

\bibitem[FG]{FG}
L. Fuentes Garc\'ia, Some results about the projective normality of
abelian varieties. Arch. Math. {\bf 85} (2005), 409-418.

\bibitem[GH]{GH}
P. Griffiths and J. Harris, {\it Principles of algebraic geometry},
Wiley-Interscience, New York, 1978.

%
%

\bibitem[HT]{HT}
J.-M. Hwang and W.-K. To, Injectivity radius and gonality of a
compact Riemann surface.  Preprint (2009).

\bibitem[Iy]{Iy} Iyer, J.: Projective normality of abelian varieties.
Trans. Amer. Math. Soc. {\bf 355} (2003), 3209-3216.

\bibitem[L1]{L1} R. Lazarsfeld, Lengths of periods and Seshadri constants of
abelian varieties, Math. Res. Lett. 3 (1996), 439-447.

\bibitem[L2]{L2}
R. Lazarsfeld,  {\it Positivity in algebraic geometry I. Classical
setting: line bundles and linear series}. Ergebnisse der Mathematik
und ihrer Grenzgebiete. 3 Folge. Springer-Verlag, Berlin, 2004.

\bibitem[Ru]{Ru}
E. Rubei, Projective normality of abelian varieties with a line
bundle of type $(2, \cdots)$. Boll. Unione Mat. Ital. Sez. B Artic.
Ric. Mat. (8) {\bf 1} (1998), 361-367.

\bibitem[St]{St}
G. Stolzenberg,
Volumes, limits, and extensions of analytic varieties. Lecture Notes in Math.
{\bf 19} (1966), 1-45.

\end{thebibliography}
